\documentclass[12pt,oneside]{amsproc}

\usepackage[cp1251]{inputenc}
\usepackage[russian,english]{babel}
\usepackage{amssymb,epic,eepic}
\usepackage[dvips]{graphicx}
\numberwithin{equation}{section}
\newtheorem{lem}{Lemma}[section]
\newtheorem{thm}{Theorem}[section]

\newtheorem{rim}{Remark}[section]

\newcommand{\Wo}{{\raisebox{0.2ex}{$\stackrel{\circ}{W}$}}{}}

\title[]{On Sharp Estimates of Derivatives of Even Order\\
in Sobolev Spaces}
\author{T.~A.~Garmanova and I.~A.~Sheipak}
\address{Faculty of Mechanics and Mathematics, Lomonosov Moscow State University,
Moscow, 119991 Russia}
\email{garmanovata@gmail.com, iasheip@yandex.ru}
\thanks{This work was supported 
by the Russian Foundation for Basic Research
under grant~19-01-00240 (Secs. 2--4) and
by the Russian Science Foundation
under grant~???? (Secs. 5--6).}

\begin{document}

\noindent UDC 517.984, 517.518.23

\begin{abstract}
 The norms of embedding operators of Sobolev spaces
$\Wo^n_2[0;1]\hookrightarrow\Wo^k_\infty[0;1]$
($0\leqslant k\leqslant n-1$)
are considered.
 The least possible quantities
$A^2_{n,k}(x)$
in the inequalities
$|f^{(k)}(x)|^2\leqslant
A^2_{n,k}(x)\|f^{(n)}\|^2_{L_2[0;1]}$
are studied.
 On the basis of the relations
between the
$A^2_{n,k}(x)$
and the antiderivatives of the Legendre polynomials,
the properties of the maxima of the functions
$A^2_{n,k}(x)$
are established.
 It is shown
that, for all~$k$,
the global maximum of the function
$A^2_{n,k}$
on the closed interval
$[0;1]$
is the maximum point nearest to the midpoint of the interval;
in particular,
for even~$k$,
$x=1/2$ is such a point.
 For the parameter~$k$ of even order,
an explicit formula for the norms of the embedding operators
is obtained.
\end{abstract}

\maketitle

{\small\textbf{Key words: }\textit{Sobolev spaces, Legendre polynomials,
embedding constants, derivative estimates}}

\section{Introduction}
 We consider the problem of finding the norms of embedding operators of the Sobolev space~$\Wo_2^n[0;1]$
in the space~$\Wo_\infty^k[0;1]$.
 By
$\Wo_p^n[0;1]$
($n\in\mathbb{N}$)
we mean the space consisting of functions~$f$
all of whose derivatives up to order $n-1$
are absolutely continuous
on the closed interval
$[0;1]$, $f^{(n)}\in L_p[0;1]$,
and the following boundary conditions hold:
$$
f^{(j)}(0)=f^{(j)}(1)=0,\quad j=0,1,\ldots, n-1.
$$

 By
$\mathcal H$
we shall denote the space~$\Wo_2^n[0;1]$
with norm
$$
\|f\|_{\mathcal H}:=\left(\int_0^1\left|f^{(n)}(x)\right|^2dx\right)^{1/2}.
$$

 For a fixed point
$x\in(0;1)$
and an integer
$k\in\{0,1,\ldots, n-1\}$,
we consider the problem of finding the norm of the functional
$f\mapsto f^{(k)}(x)$
in the space~$\mathcal H$:
\begin{equation}
\label{eq:ekstr}
A_{n,k}(x):=\sup\left\{|f^{(k)}(x)|:\quad \|f\|_{\mathcal H}=1\right\}.
\end{equation}

 The exact embedding constant is defined
by the formula
\begin{equation}
\label{eq:const_emb}
\Lambda_{n,k}:=\sup_{x\in[0;1]} A_{n,k}(x)
\end{equation}
and is equal to the norm of the embedding operator
$\mathcal H\to \Wo^k_\infty[0;1]$.
 It is more convenient to calculate the quantities
$A^2_{n,k}(x)$.
 This is related to the fact that the quantities
$A^2_{n,k}(x)$
satisfy recurrence relations
in which the antiderivatives of the Legendre polynomials appear.

 Estimates of derivatives of intermediate order
in terms of the norms of Sobolev spaces
arise in many problems.
 In addition,
studies of embedding constants often involve
elegant relations with special functions,
in particular, with Legendre polynomials
(see~\cite{Kalyab}) and Bessel functions
(see~\cite{Naz});
thus, studies of embedding constants are closely related
to function theory.
 Certain questions in the embedding theory for Sobolev spaces
are of interest in connection with the spectral theory of operators,
for example, these are closely connected
with the problem of finding smallest eigenvalues for certain differential operators
(see~\cite{Kalyab} and the references therein).
 The problem of finding exact embedding constants
has a rather long history.
 In the present paper,
we are not concerned with questions
related to the embedding of the Sobolev spaces
$$
\Wo_p^n[0;1] \hookrightarrow \Wo_q^k[0;1]
$$
for arbitrary indices~$p$ and~$q$.
 A short survey of the history of such problems
was given in~\cite{NazM}.

 In~\cite{Kalyab}, explicit expressions for
$A^2_{n,0}(x)$, $A^2_{n,1}(x)$,
and
$A^2_{n,2}(x)$
for
$x\in[-1;1]$ were obtained,
their global maxima were found,
$\Lambda^2_{n,0}$, $\Lambda^2_{n,1}$,
and
$\Lambda^2_{n,2}$ were calculated,
and it was conjectured that, for even~$k$,
the maximum of
$A^2_{n,k}$
is attained
at the midpoint of a closed interval
and,
for odd~$k$
$A^2_{n,k}$ has a minimum
at the midpoint of a closed interval.

 In~\cite{NazM},
it was proved that
the midpoint of a closed interval
is the point at which the functions
$A^2_{n,k}$
have a local maximum for even~$k$
and a local minimum
for odd~$k$.
 In addition,
in~\cite{NazM}, the constants
$\Lambda^2_{n,4}$
and
$\Lambda^2_{n,6}$
were calculated,
but, because of an arithmetic error
at the end of the calculations,
false values of these quantities were obtained.
 It was also shown in~\cite{NazM} that, for
$k=2,4$
and
$n>k$,
the extremal function
possesses symmetry with respect to the midpoint of a closed interval.
 We note that, for
large~$k$ and~$n$,
the difficulties involved in the calculation of the quantities
$A^2_{n,k}$
increase significantly.
 At the same time,
as is seen, for example, from results given in~\cite{Kalyab}
and~\cite{GarmSh},
the cases of odd values of~$k$
are considerably more complicated than those of even values.

 The approach in~\cite{Kalyab}
was further developed in~\cite{GarmSh},
where explicit formulas for the functions
$A^2_{n,3}$, $A^2_{n,4}$, $A^2_{n,5}$
were obtained
and the exact constants
$\Lambda_{n,3}$
and
$\Lambda_{n,5}$
were found.
 Also,
in that paper,
in finding the global maximum of the quantity
$A^2_{n,5}(x)$, $x\in(0;1)$,
the authors had to calculate the extrema of some very
awkward expressions.
 As a result, it becomes more difficult
to determine which local maximum point of the function
$A^2_{n,k}(x)$
is a global maximum point.
 In~\cite{GarmSh}, the determination of the global maximum of the function
$A^2_{n,5}$
involved elaborate technical manipulations.
 This leads to the conclusion
that using former methods, one can hardly find the global maximum of
$A^2_{n,k}(x)$
for
arbitrary
$k\geqslant 6$.
 However, the previous studies provided help
for understanding the structures of the derivatives of the functions
$A^2_{n,k}(x)$
for arbitrary~$n$ and~$k$
(see Lemma~\ref{lem:APP}).

 It is more convenient
to consider embedding constants
on the closed interval
$[0;1]$.
 This is related to the fact that the functions
$A^2_{n,k}$
are recursively related to the antiderivatives of the Legendre polynomials
and, for them, the formulas
on the closed interval
$[0;1]$
are less cumbersome.

 In connection with the foregoing,
the question of finding the global maximum points of the functions
$A^2_{n,k}(x)$, $x\in(0;1)$
for all
$n\in \mathbb{N}$
and
$0\leqslant k\leqslant n-1$
arises.
 It was conjectured in~\cite{GarmSh} that a global maximum
is attained at the maximum point
nearest to the midpoint of the closed interval under consideration.
 The studies carried out in~\cite{Kalyab},~\cite{NazM},
and, especially, in~\cite{GarmSh} show that
it is simpler to consider other variables in the embedding constants.
 On the closed interval
$[-1;1]$, it is better to pass to $t=x^2-1$, while,
in the case
$x\in[0;1]$,
it is better to make the change of variable
$t:=x^2-x\in[-\frac14;0]$.
 In these variables,
the conjecture on the global maximum
is as follows:
\textit{for arbitrary~$k$,
the global maximum of
$A^2_{n,k}(t)$
is attained
at the maximum point nearest to
$-1/4$\textup;
in particular,
for even~$k$,
the point
$t=-1/4$
is a global maximum point.}
 The goal of the present paper
is, first, to prove this conjecture
and, second, to obtain the explicit form of the exact embedding constants
$\Lambda^2_{n,k}$
for all even~$k$.

 The structure of the paper is as follows.
 In Sec.~2,
we present some necessary recurrence relations
for the Legendre polynomials
on the closed interval
$[0;1]$;
in Sec.~3,
we prove the relations between the functions
$A^2_{n,k}(x)$
and the antiderivatives of the Legendre polynomials;
in Sec.~4,
we prove the properties of the maxima of the function
$A^2_{n,k}$
and establish the relationship between the extremal points
and the zeros of the corresponding antiderivatives of the Legendre polynomials;
in Sec.~5,
we find the explicit form of the exact embedding constants
on the closed interval
$[-1;1]$
for
arbitrary
$n\in\mathbb{N}$
and all even
$k\in[0;n-1]$;
in Sec.~6,
we point out the relationship between the embedding constants
and the smallest eigenvalue from the class of spectral problems
with coefficients-distributions
as well as give the recalculation formulas
for the functions
$A^2_{n,k}$
determined
on the closed intervals
$[0;1]$
and
$[-1;1]$,
respectively.

 Throughout the paper, except for Sec.~5,
by
$P_n$
we denote shifted Legendre polynomials
generating an orthogonal basis
in the space~$L_2[0;1]$.
 In Sec.~5, this notation
is used for the Legendre polynomials on the closed interval $[-1;1]$.

 By
$t$
we denote the change
$t:=x^2-x$, $x\in[0;1]$, $t\in\left[-\frac14;0\right]$.
 By an \textit{exact} embedding constant
we mean the quantity
$\Lambda_{n,k}$.

\section{Recurrence relations for the antiderivatives of the Legendre polynomials.}

 The Legendre polynomials
$P_n$, $n=0,1,\ldots$
on the closed interval
$[0;1]$
are given by the Rodrigues formula
\begin{equation}
\label{eq:Leg}
P_n(x):=\frac{1}{n!}\left((x^2-x)^n\right)^{(n)}.
\end{equation}
  The antiderivative of order $m\geqslant 0$
is defined by the formula
\begin{equation}
\label{eq:LegPrim}
P_n^{(-m)}(x):=\frac{1}{n!}\left((x^2-x)^n\right)^{(n-m)}.
\end{equation}

 As a rule,
we will work with Legendre polynomials
and their antiderivatives
using the variable
$t:=x^2-x$.
 At the same time,
according to formulas~\eqref{eq:Leg},~\eqref{eq:LegPrim},
the derivative will be regarded
as a derivative with respect to the variable~$x$.

\begin{lem}
 The antiderivatives of shifted Legendre polynomials
satisfy the relation
$$
P_{n+1}^{(k-n+1)}=2(k+1)P_{n}^{(k-n)}+(2x-1) P_{n}^{(k-n+1)}.
$$
\end{lem}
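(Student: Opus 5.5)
The plan is to reduce both sides to derivatives of powers of $x^2-x$ via the Rodrigues-type formulas \eqref{eq:Leg}, \eqref{eq:LegPrim}, and then apply the Leibniz rule once. By \eqref{eq:LegPrim}, an index of the form $P_m^{(j)}$ (a derivative for $j\geqslant0$, an antiderivative for $j<0$) means $\frac{1}{m!}\bigl((x^2-x)^m\bigr)^{(m+j)}$. Applied to each term, this gives
$$
P_{n+1}^{(k-n+1)}=\frac{1}{(n+1)!}\bigl((x^2-x)^{n+1}\bigr)^{(k+2)},\qquad
P_{n}^{(k-n)}=\frac{1}{n!}\bigl((x^2-x)^{n}\bigr)^{(k)},\qquad
P_{n}^{(k-n+1)}=\frac{1}{n!}\bigl((x^2-x)^{n}\bigr)^{(k+1)}.
$$
These expressions make sense for every integer $k\geqslant0$, since all orders of differentiation are then nonnegative. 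So the claim becomes an identity between derivatives of $t^{n+1}$ and $t^n$, where $t=x^2-x$ and differentiation is in $x$.

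The key step is to peel off one derivative from $(t^{n+1})^{(k+2)}$. By the chain rule,
$$
\frac{d}{dx}(x^2-x)^{n+1}=(n+1)(2x-1)(x^2-x)^n,
$$
hence
$$
\bigl((x^2-x)^{n+1}\bigr)^{(k+2)}=(n+1)\bigl((2x-1)(x^2-x)^n\bigr)^{(k+1)}.
$$
Next I apply the Leibniz formula to the product on the right. Because $2x-1$ is linear, only two terms survive:
$$
\bigl((2x-1)(x^2-x)^n\bigr)^{(k+1)}=(2x-1)\bigl((x^2-x)^n\bigr)^{(k+1)}+2(k+1)\bigl((x^2-x)^n\bigr)^{(k)}.
$$

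Finally, dividing by $(n+1)!$ cancels the factor $n+1$. The two terms are then exactly $(2x-1)P_n^{(k-n+1)}$ and $2(k+1)P_n^{(k-n)}$ by the expressions above, which is the claimed relation.

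There is no real obstacle; the argument is a direct computation. The only point needing care is the bookkeeping of the superscript convention. For $k<n$ the objects are antiderivatives, and I must check that the definition \eqref{eq:LegPrim} is used consistently, i.e., as the specific derivative of $(x^2-x)^n$ and not as an arbitrary antiderivative. Then no constants of integration arise and the identity holds exactly, for all $k\geqslant0$.
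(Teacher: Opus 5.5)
Your proof is correct and is essentially the paper's argument: the paper likewise writes $P_{n+1}^{(k-n+1)}$ as $\bigl(t^{n+1}/(n+1)!\bigr)^{(k+2)}$, peels off one derivative to get $\bigl((2x-1)t^{n}/n!\bigr)^{(k+1)}$, and applies the Leibniz rule with the linear factor $2x-1$. Your explicit check of the superscript convention, which shows that no integration constants arise, adds a useful clarification to the same computation.
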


\begin{proof}
 Since
$t=x^2-x$
and
$\dfrac{d}{dx}=(2x-1)\dfrac{d}{dt}$,
by the definition of the antiderivative of the Legendre polynomial,
we have
\begin{equation}
\label{eq:Pder1}
P_{n}^{(k-n+1)}=\left(\dfrac{t^{n+1}}{(n+1)!}\right)^{(k+2)}=\left(\dfrac{t^{n}}{n!}(2x-1)
\right)^{(k+1)}=
(2x-1)\left(\dfrac{t^{n}}{n!}\right)^{(k+1)}+2(k+1)\left(\dfrac{t^{n}}{n!}\right)^{(k)}.
\end{equation}
 On the other hand,
\begin{equation}
\label{eq:Pder2}
P_{n}^{(k-n)}=\left(\dfrac{t^n}{n!}\right)^{(k)},\qquad
P_{n}^{(k-n+1)}=\left(\dfrac{t^{n}}{(n)!}\right)^{(k+1)}.
\end{equation}

 The assertion of the lemma follows from
relations~\eqref{eq:Pder1},~\eqref{eq:Pder2}.
\end{proof}

\begin{lem}
\label{lem:Pdifur}
 The function
$P_n^{(-m)}$,
the antiderivative of order $m\geqslant 0$
of the shifted Legendre polynomial~$P_n$,
satisfies the differential equation
\begin{equation}
\label{eq:Pdifur}
(x-x^2)y''+(2x-1)(m-1)y'+(n+m)(n-m+1)y=0.
\end{equation}
\end{lem}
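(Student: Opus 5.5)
Set $u=(x^2-x)^n=t^n$ and $N=n-m$, so that $P_n^{(-m)}=u^{(N)}/n!$. Up to the constant factor, the claim is that $y=u^{(N)}$ satisfies \eqref{eq:Pdifur}. We differentiate the first-order equation satisfied by $u$ repeatedly, using the Leibniz rule, as in the classical derivation of the Legendre equation.

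First, $u'=n(x^2-x)^{n-1}(2x-1)$, and therefore
$$
(x^2-x)\,u'=n(2x-1)\,u .
$$
Differentiate this identity $N+1$ times with the Leibniz rule, noting that $(x^2-x)'=2x-1$, $(x^2-x)''=2$, and that the higher derivatives of $x^2-x$ vanish. The left-hand side becomes
$$
(x^2-x)u^{(N+2)}+(N+1)(2x-1)u^{(N+1)}+N(N+1)\,u^{(N)}.
$$
The right-hand side becomes
$$
n(2x-1)u^{(N+1)}+2n(N+1)\,u^{(N)}.
$$
Move everything to one side and multiply by $-1$. This gives
$$
(x-x^2)y''+(n-N-1)(2x-1)y'+(N+1)(2n-N)\,y=0,\qquad y=u^{(N)}.
$$

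Now substitute $N=n-m$. Then $n-N-1=m-1$ and $(N+1)(2n-N)=(n-m+1)(n+m)$, which is exactly \eqref{eq:Pdifur}. Dividing by $n!$ does not affect the linear homogeneous equation.

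The only point needing care is the range of $m$. For $0\le m\le n$, $N\ge 0$ is an ordinary derivative order and the computation is literal. For $m>n$, $N<0$, and $u^{(N)}$ should be read as the antiderivative used in the paper: the $|N|$-fold antiderivative vanishing to high order at $x=0$, or equivalently the convention of \eqref{eq:LegPrim}. In that case I would argue by descending induction instead of the Leibniz rule. Suppose $y$ satisfies \eqref{eq:Pdifur} with parameter $m$, and let $z$ be an antiderivative with $z'=y$. Integrate the equation once, using $\int (x-x^2)y''=(x-x^2)y'-\int(1-2x)y'$ and the analogous integration by parts for the middle term. One obtains the equation with $m+1$, up to an additive constant. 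That constant is zero because the chosen antiderivatives vanish at the endpoint.

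The main expected obstacle is only this bookkeeping of the constant, or equivalently fixing the convention for negative-order derivatives. The core case $m\le n$ is the routine Leibniz computation above.
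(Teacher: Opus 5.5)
Your proof is correct, but it takes a different route from the paper's.

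\textbf{The paper's route.} The paper starts from the Legendre equation for $m=0$. It posits that $P_n^{(-m)}$ satisfies an equation of the form $(x-x^2)y''+c_{n,m}(2x-1)y'+d_{n,m}y=0$ and differentiates it to get an equation for $P_n^{(-(m-1))}$. Matching coefficients gives the recurrences $c_{n,m}=c_{n,m-1}+1$ and $d_{n,m}=d_{n,m-1}-2c_{n,m}$, which it then solves.

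\textbf{Your route.} You differentiate the first-order identity $(x^2-x)u'=n(2x-1)u$ exactly $n-m+1$ times. This is the classical derivation of the Legendre equation stopped early. It gives the equation for every $0\le m\le n$ at once, with no recursion and no ansatz.

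\textbf{What each buys.} Differentiation carries an equation from $P_n^{(-m)}$ \emph{down} to $P_n^{(-(m-1))}$. So the paper's argument, read literally, only determines what the coefficients must be once an equation of that form is known to exist. Existence for the higher antiderivative (the upward step) is exactly your integration-by-parts computation, including the check that the constant of integration vanishes. The paper's route is shorter and shows transparently how the coefficients shift from one antiderivative to the next.

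\textbf{The case $m>n$.} Your treatment makes explicit something the paper passes over:
\begin{itemize}
\item For $m=n+1$ the coefficient $(n+m)(n-m+1)$ of $y$ vanishes, so any antiderivative works.
\item For $m\ge n+2$ that coefficient is nonzero, so the statement holds only for a specific normalization of the antiderivatives.
\end{itemize}
With your normalization (vanishing at $x=0$), the constant equals $-m\,y(0)+(n+m+1)(n-m)\,z(0)$. This is zero at the base step $m=n$ because $n-m=0$ and $m\,y(0)=0$. It stays zero afterwards because $y(0)=z(0)=0$.

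Since the paper only applies the lemma with $m\le n$ (for example $m=n-k$ with $0\le k\le n-1$ in Theorem~\ref{thm:derAPP}), your Leibniz computation alone already covers every use made of it.
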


\begin{proof}
1)
 For
$m=0$,
the shifted Legendre polynomial~$P_n$
satisfies the equation
$$
(x-x^2)y''-(2x-1)y'+n(n+1)y=0.
$$

2)
 For the function
$P_n^{(-m)}$
($m\geqslant 1$),
we shall search for the corresponding differential equation
in the form
\begin{equation}
\label{eq:Pdifur_gen}
(x-x^2)y''+c_{n,m}(2x-1)y'+d_{n,m}y=0
\end{equation}
and find the recurrence relations
for the coefficients
$c_{n,m}$
and
$d_{n,m}$.
 It follows from Sec.~1 that
$c_{n,0}=-1$, $d_{n,0}=n(n+1)$.

 Differentiating Eq.~\eqref{eq:Pdifur_gen},
we see that the polynomial
$(P_n^{(-m)})'=P_n^{(-(m-1))}$
satisfies the equation
$$
(x-x^2)y''+(2x-1)(c_{n,m}-1)y'+(d_{n,m}+2c_{n,m})y=0,
$$
whence
$c_{n,m}=c_{n,m-1}+1$, $d_{n,m}=d_{n,m-1}-2c_{n,m}$.
 Solving these recurrence relations,
we obtain
$$
c_{n,m}=m-1,\qquad d_{n,m}=(n+m)(n-m+1).
$$
\end{proof}

 Similar formulas for the antiderivatives of the classical Legendre polynomials
were obtained in~\cite{HolSh}.

\begin{lem}
 For the antiderivatives of shifted Legendre polynomials,
the following relations hold{\rm:}
\begin{equation}
\label{eq:Prec}
2(2n+1) P_{n}^{(-m)}=P_{n+1}^{(-m+1)}-P_{n-1}^{(-m+1)}.
\end{equation}
\end{lem}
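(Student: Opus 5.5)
The plan is to reduce \eqref{eq:Prec} to an identity between powers of $t=x^2-x$ before any differentiation is applied. By \eqref{eq:LegPrim},
$$
P_n^{(-m)}=\Bigl(\frac{t^n}{n!}\Bigr)^{(n-m)},\qquad
P_{n+1}^{(-m+1)}=\Bigl(\frac{t^{n+1}}{(n+1)!}\Bigr)^{(n-m+2)},\qquad
P_{n-1}^{(-m+1)}=\Bigl(\frac{t^{n-1}}{(n-1)!}\Bigr)^{(n-m)}.
$$
All derivatives here are taken with respect to $x$. So all three terms are the $(n-m)$th derivative of something: the first of $t^n/n!$, the second of $\bigl(t^{n+1}/(n+1)!\bigr)''$, the third of $t^{n-1}/(n-1)!$. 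It therefore suffices to prove the underived identity
$$
\Bigl(\frac{t^{n+1}}{(n+1)!}\Bigr)''=2(2n+1)\frac{t^n}{n!}+\frac{t^{n-1}}{(n-1)!},
$$
and then differentiate it $n-m$ times. This requires $1\le n$ and $0\le m\le n$, which is exactly the range where all three antiderivatives are defined by \eqref{eq:LegPrim}.

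To verify the underived identity, I use $dt/dx=2x-1$ and $(2x-1)^2=4t+1$, as in the proof of the first lemma of this section. First,
$$
(t^{n+1})'=(n+1)t^n(2x-1).
$$
Differentiating once more,
$$
(t^{n+1})''=(n+1)\bigl[n t^{n-1}(2x-1)^2+2t^n\bigr]=(n+1)\bigl[(4n+2)t^n+n t^{n-1}\bigr].
$$
Dividing by $(n+1)!$ gives $2(2n+1)t^n/n!+t^{n-1}/(n-1)!$, as required.

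The only point needing care is bookkeeping rather than a genuine obstacle. One must check that the derivative orders match, namely $(n+1)-(m-1)=(n-m)+2$ and $(n-1)-(m-1)=n-m$. One must also note that the edge cases (for instance $m=n$, where $P_{n-1}^{(-n+1)}=t^{n-1}/(n-1)!$ itself) are covered by the same computation, since no differentiation is then applied.
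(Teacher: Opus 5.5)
Your proof is correct and is essentially the paper's own argument. The paper also computes $\bigl(t^{n+1}/(n+1)!\bigr)''=2(2n+1)t^n/n!+t^{n-1}/(n-1)!$ via $dt/dx=2x-1$ and $(2x-1)^2=4t+1$, then applies $n-m$ further derivatives to match the representations of $P_n^{(-m)}$ and $P_{n-1}^{(-m+1)}$; the one trivial quibble is that for $m=0$ the terms $P_{n\pm1}^{(1)}$ are derivatives rather than antiderivatives of order $\geqslant 0$, but the same formula, and hence your computation, covers them.
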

\begin{proof}
 The derivatives are regarded
as derivatives with respect to the variable~$x$.
 Since $t=x^2-x$
and
$\dfrac{d}{dx}=(2x-1)\dfrac{d}{dt}$,
by the definition of the antiderivative of the Legendre polynomial,
we have
$$
P_{n+1}^{(-m+1)}=\left(\dfrac{t^{n+1}}{(n+1)!}\right)^{(n+2-m)}=\dfrac{1}{n!}(t^n(2x-1))^{
(n+1-m)}=
\dfrac{1}{n!}(nt^{n-1}(4t+1)+2t^n)^{(n-m)}.
$$
 After simplifications, we see that
\begin{equation}
\label{eq:Pprim1}
P_{n+1}^{(-m+1)}=2(2n+1)\left(\dfrac{t^n}{n!}\right)^{(n-m)}+
\left(\dfrac{t^{n-1}}{(n-1)!}\right)^{(n-m)}.
\end{equation}
 Similarly, we obtain
\begin{equation}
\label{eq:Pprim2}
P_{n-1}^{(-m+1)}=\left(\dfrac{t^{n-1}}{(n-1)!}\right)^{(n-m)},\quad
P_{n}^{(-m)}=\left(\dfrac{t^{n}}{n!}\right)^{(n-m)}.
\end{equation}
 The assertion of the lemma follows from relations~\eqref{eq:Pprim1},~\eqref{eq:Pprim2}.
\end{proof}

 In particular,
for the antiderivative of order $m=n-k$
of the Legendre polynomials
used in the embedding constants,
we obtain
\begin{equation}
\label{eq:Pprimk}
2(2n+1) P_{n}^{(k-n)}= P_{n+1}^{(k-n+1)}- P_{n-1}^{(k-n+1)}.
\end{equation}

\section{Relations between the Legendre polynomials
and the functions $A^2_{n,k}$}

 Let us recall
that, in~\cite{GarmSh},
the following relation was obtained:
\begin{equation}
\label{eq:PA}
A^2_{n,k}(x)=A^2_{n-1,k-1}(x)-(2n-1)\left(P_{n-1}^{(k-n)}(x)\right)^2.
\end{equation}

\begin{lem}
\label{lem:APP}
 For the functions
$A^2_{n,k}(x)$,
the following relation holds{\rm:}
$$
\dfrac{d}{dx}(A^2_{n,k})=-P_{n-1}^{(k-n+1)}\cdot P_{n}^{(k-n+1)}.
$$
\end{lem}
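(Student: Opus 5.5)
We argue by induction on $k$, using the recurrence \eqref{eq:PA} and the relation \eqref{eq:Prec}.

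\textbf{Base case $k=0$.} Here we need an explicit formula for $A^2_{n,0}$. It can be obtained by iterating \eqref{eq:PA} down to $k=0$, but \eqref{eq:PA} decreases $n$ and $k$ together, so it cannot produce $A^2_{n,0}$ itself. Instead we use the known representation from \cite{GarmSh}/\cite{Kalyab}. The function $A^2_{n,0}(x)$ equals $G(x,x)$, where $G$ is the Green function of $(-1)^nD^{2n}$ with the Dirichlet conditions. Hence
$$
A^2_{n,0}(x)=\int_0^1 \frac{\bigl((x-s)_+^{n-1}\bigr)^2}{((n-1)!)^2}\,ds-\sum_{j=0}^{n-1}(2j+1)\Bigl(P_j^{(-n)}(x)\Bigr)^2,
$$
since projecting out the $n$ moment conditions amounts to subtracting the components along $P_0,\dots,P_{n-1}$ of the kernel $\frac{(x-s)_+^{n-1}}{(n-1)!}$. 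The $j$-th coefficient of this kernel is precisely $\pm P_j^{(-n)}(x)$, which follows by integrating by parts via \eqref{eq:LegPrim}.

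Differentiating, $\frac{d}{dx}\bigl(P_j^{(-n)}\bigr)^2=2P_j^{(-n)}P_j^{(-n+1)}$. We then telescope using \eqref{eq:Prec} with $m=n$, in the form $2(2j+1)P_j^{(-n)}=P_{j+1}^{(-n+1)}-P_{j-1}^{(-n+1)}$. Multiplying by $P_j^{(-n+1)}$ and summing over $j$ turns the sum into a telescoping sum of products $P_{j+1}^{(-n+1)}P_j^{(-n+1)}$. What survives is the boundary term $-P_{n-1}^{(-n+1)}P_n^{(-n+1)}$ together with a term that cancels the derivative of the integral $x^{2n-1}/((2n-1)((n-1)!)^2)$; this cancellation comes from the lowest-index end, using $P_0^{(-n+1)}=x^{n-1}/(n-1)!$. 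This gives the claim for $k=0$.

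\textbf{Inductive step.} Assume the claim holds for $(n-1,k-1)$, i.e.
$$
(A^2_{n-1,k-1})'=-P_{n-2}^{(k-n+1)}P_{n-1}^{(k-n+1)}.
$$
Differentiating \eqref{eq:PA} gives
$$
(A^2_{n,k})'=-P_{n-2}^{(k-n+1)}P_{n-1}^{(k-n+1)}-2(2n-1)P_{n-1}^{(k-n)}P_{n-1}^{(k-n+1)}.
$$
Apply \eqref{eq:Prec} with $n-1$ in place of $n$ and $m=n-1-k$:
$$
2(2n-1)P_{n-1}^{(k-n+1)}=P_n^{(k-n+2)}-P_{n-2}^{(k-n+2)}.
$$
The orders here do not line up with the second term, so I would rather use the version \eqref{eq:Prcek}, i.e. \eqref{eq:Pprimk}, with $n\to n-1$:
$$
2(2n-1)P_{n-1}^{(k-n)}=P_n^{(k-n+1)}-P_{n-2}^{(k-n+1)}.
$$
Substituting this, the second term becomes
$$
-P_{n-1}^{(k-n+1)}P_n^{(k-n+1)}+P_{n-1}^{(k-n+1)}P_{n-2}^{(k-n+1)}.
$$
Its last product cancels the first term, leaving exactly $-P_{n-1}^{(k-n+1)}P_n^{(k-n+1)}$, which is the claim.

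\textbf{Main obstacle.} The inductive step is a two-line computation. The real work is the base case $k=0$: establishing the explicit Gram-type formula for $A^2_{n,0}$ and tracking the signs and normalizations of the antiderivatives in the telescoping. If that formula is cited from \cite{GarmSh}, it only remains to check the telescoping and the cancellation with the lowest-index term.

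One could alternatively take the base case at $k$ with $n=k+1$ and induct on $n-k$. But \eqref{eq:PA} links $(n,k)$ to $(n-1,k-1)$, so the chain starting from $(n,k)$ ends at $(n-k,0)$, and $k=0$ for all $n$ is the natural base.
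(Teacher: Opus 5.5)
Your proof is correct once one point in the base case is repaired (second paragraph below). The inductive step is the paper's, with indices shifted: the paper passes from $(n,k)$ to $(n+1,k+1)$, you from $(n-1,k-1)$ to $(n,k)$. The instance you need is \eqref{eq:Prec} with $n\to n-1$ and $m=n-k$, which is legitimate since $k\ge1$; the first instance you wrote down, with $m=n-1-k$, is not the one used. The real difference is the base case:
\begin{itemize}
\item The paper starts at $k=1$. It differentiates the closed form of $A^2_{n,1}$ quoted from \cite{GarmSh} and compares it with the explicit product $P_{n-1}^{(2-n)}P_n^{(2-n)}$. Its induction therefore formally covers only $k\ge1$; the case $k=0$, used in the base of Lemma~\ref{lem:zeroes}, is checked there by hand.
\item You start at $k=0$ and derive $A^2_{n,0}(x)=\|K_x\|^2-\sum_{j<n}(2j+1)\langle K_x,P_j\rangle^2$ from first principles. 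Taylor's formula and the conditions at $x=1$ give $f^{(n)}\perp P_0,\dots,P_{n-1}$. This covers every $0\le k\le n-1$, and the telescoping shows where the product structure comes from.
\item If you only want the base, the closed form $A^2_{n,0}(x)=(x-x^2)^{2n-1}/\bigl((2n-1)((n-1)!)^2\bigr)$ gives it in one line.
\end{itemize}

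The point to repair concerns the antiderivatives you use. For $j<n$ the symbol $P_j^{(-n)}$, and for $j<n-1$ the symbol $P_j^{(-n+1)}$, is not defined by \eqref{eq:LegPrim}, which needs $m\le j$. The paper also proves \eqref{eq:Prec} only for $m\le n$. So neither ``via \eqref{eq:LegPrim}'' nor ``\eqref{eq:Prec} with $m=n$'' applies literally. The fix:
\begin{itemize}
\item Define $P_j^{(-m)}(x):=\int_0^x\frac{(x-s)^{m-1}}{(m-1)!}P_j(s)\,ds$. For $m=n$ this is exactly $\langle K_x,P_j\rangle$, with sign $+$. It agrees with \eqref{eq:LegPrim} when $m\le j$, since both are $m$-fold antiderivatives of $P_j$ vanishing to order $m$ at $0$.
\item Integrate $2(2j+1)P_j=P_{j+1}'-P_{j-1}'$ $m$ times from $0$. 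For $j\ge1$ this gives \eqref{eq:Prec} for all $m$, because $P_{j+1}(0)=P_{j-1}(0)=(-1)^{j+1}$ and the integration constants cancel.
\item For $j=0$ the relation becomes $2P_0^{(-n)}=P_1^{(-n+1)}+P_0^{(-n+1)}$. Its extra term contributes $\bigl(P_0^{(-n+1)}\bigr)^2=x^{2n-2}/((n-1)!)^2$. This is precisely the cancellation with $\frac{d}{dx}\|K_x\|^2$ that you anticipated at the lowest-index end.
\end{itemize}
The telescoping then leaves exactly $-P_{n-1}^{(-n+1)}P_n^{(-n+1)}$, and both factors lie in the range of \eqref{eq:LegPrim}. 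With this extension stated explicitly, your argument is complete.
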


\begin{proof}
 Let us carry out the proof by the method of mathematical induction.

1) Since $A^2_{n,1}(t)=\dfrac{-t^{2n-3}}{((n-1)!)^2
(2n-3)}\left[(2n-1)(2n-3)t+(n-1)^2\right]$
(see~\cite{GarmSh}),
after simplifications,
we obtain
$$
\dfrac{d}{dx}(A^2_{n,1})=\dfrac{-t^{2n-4}(2a-1)(n-1)}{((n-1)!)^2}\left[2(2n-1)t+n-1\right].
$$
 On the other hand,
\begin{align*}
P_{n-1}^{(2-n)}&=\dfrac{d}{dx}\left(\dfrac{t^{n-1}}{(n-1)!}\right)=\dfrac{t^{n-2}(2x-1)}{(n
-2)!},
\\
P_{n}^{(2-n)}&=\dfrac{d^2}{dx^2}\left(\dfrac{t^{n}}{n!}\right)=
\dfrac{t^{n-2}}{(n-1)!}\left[2(2n-1)t+n-1\right].
\end{align*}
 Multiplying the last two equalities,
we see that the assertion holds
for
$k=1$
and any~$n$.

2) Let the assertion hold
for some~$k$ and any~$n$.
 Since
$$
A^2_{n+1,k+1}(x)=A^2_{n,k}(x)-(2n+1)\left(P_n^{(k-n)}\right)^2,
$$
differentiating with respect to~$x$,
we obtain
$$
(A^2_{n+1,k+1})'=(A^2_{n,k})'-2(2n+1)P_n^{(k-n)}\cdot P_n^{(k-n+1)}.
$$
 Replacing $(A^2_{n,k})'$
by  the expression
$(A^2_{n,k})'=-P_{n-1}^{(k-n)}\cdot P_{n}^{(k-n)}$,
which is valid by the induction assumption,
and the multiplier
$2(2n+1) P_n^{(k-n)}$
given by formula~\eqref{eq:Pprimk}
by the difference
$P_{n+1}^{(k-n+1)}-P_{n-1}^{(k-n+1)}$,
we obtain the required relation.
\end{proof}

\begin{rim}
\label{rim}
 In view of Eq.~\eqref{eq:Pdifur},
it follows from Sturm theory that,
between any two zeros of the polynomial
$P_{n}^{(k-n+1)}$,
there is at least one zero of
$P_{n-1}^{(k-n+1)}$.
\end{rim}

 We shall need more subtle properties of the zeros of the polynomials
$P_{n-1}^{(k-n+1)}$
and
$P_{n}^{(k-n+1)}$
that do not follow from Sturm theory.
 Let us prove a more general result.

\begin{lem}
\label{lem:zeroes}
 The roots of the polynomials
$P_{n-1}^{(k-n+1)}$
and
$P_{n}^{(k-n+1)}$
lying
on the interval
$(0;1)$
alternate.
 The zeros of
$P_{n-1}^{(k-n+1)}$
are the minimum points of the function
$A^2_{n,k}(x)$
and
the zeros of
$P_{n}^{(k-n+1)}$
are its maximum points.
\end{lem}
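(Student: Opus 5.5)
The plan is to combine a zero count for the two polynomials with Remark~\ref{rim}, and then read off the type of each extremum from the sign of $\frac{d}{dx}A^2_{n,k}$ given by Lemma~\ref{lem:APP}. Write
$$
Q:=P_{n-1}^{(k-n+1)}=\frac{1}{(n-1)!}\bigl((x^2-x)^{n-1}\bigr)^{(k)},\qquad
R:=P_{n}^{(k-n+1)}=\frac{1}{n!}\bigl((x^2-x)^{n}\bigr)^{(k+1)} .
$$

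\emph{Step 1: counting zeros.} The polynomial $(x^2-x)^{n-1}$ has zeros of multiplicity $n-1$ at $x=0$ and $x=1$, and $k\le n-1$. Applying Rolle's theorem repeatedly, its $k$-th derivative has a zero of multiplicity $n-1-k$ at each endpoint and at least $k$ zeros in $(0;1)$. The degree is $2n-2-k$, so these zeros are exactly $k$ simple zeros in $(0;1)$. The same argument for $(x^2-x)^n$ differentiated $k+1$ times gives exactly $k+1$ simple zeros of $R$ in $(0;1)$, with multiplicity $n-1-k$ at each endpoint. Simplicity also follows from Lemma~\ref{lem:Pdifur}: $(0;1)$ contains no singular point of the equation, so a double interior zero would force the solution to vanish identically.

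\emph{Step 2: alternation.} By Remark~\ref{rim}, which rests on Lemma~\ref{lem:Pdifur} and the Sturm comparison theorem, between any two consecutive zeros of $R$ there is at least one zero of $Q$. Label the zeros of $R$ in $(0;1)$ as $r_1<\dots<r_{k+1}$. The $k$ gaps $(r_i;r_{i+1})$ each contain a zero of $Q$, and $Q$ has only $k$ zeros. Hence each gap contains exactly one zero of $Q$, and $Q$ has no zeros in $(0;r_1)$ or in $(r_{k+1};1)$, which is precisely the alternation. The main obstacle is checking that the Sturm comparison really applies. One must put the two equations of Lemma~\ref{lem:Pdifur} (for $(n,m)=(n,n-k-1)$ and $(n-1,n-k-2)$) into self-adjoint form with the same weight and verify that the potential coefficients compare in the right direction: $(2n-1-k)(k+2)$ versus $(2n-3-k)(k+1)$, after dividing by the common leading term. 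The remark already asserts this, so I would only double-check that the first-order coefficients $m-1$ coincide after the index shift, so that the common integrating factor is $(x-x^2)^{m-1}$.

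\emph{Step 3: maxima and minima.} By Lemma~\ref{lem:APP}, $\frac{d}{dx}A^2_{n,k}=-QR$. Since all interior zeros of $Q$ and $R$ are simple and distinct, this derivative changes sign at each of them, so every one of them is a strict local extremum. It then suffices to fix the sign of the derivative near $x=0$. Since $t^{n-1}=(-1)^{n-1}x^{n-1}(1-x)^{n-1}$, one has near $0$
$$
Q\sim(-1)^{n-1}c_1x^{n-1-k}\quad\text{and}\quad R\sim(-1)^{n}c_2x^{n-1-k},\qquad c_1,c_2>0 .
$$
Therefore $-QR>0$ on $(0;r_1)$, and $A^2_{n,k}$ increases up to $r_1$, a zero of $R$. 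Because the zeros alternate $r_1<q_1<r_2<\dots$ and the sign of the derivative flips at each of them, the zeros of $R$ are local maxima and the zeros of $Q$ are local minima.
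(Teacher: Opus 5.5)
Your Steps~1 and~3 are fine. In Step~3, fixing the sign of $-QR$ near $x=0$ even avoids the external midpoint result that the paper's inductive proof imports. The gap is in Step~2. Given the zero counts from Step~1, the assertion of Remark~\ref{rim} is \emph{equivalent} to the alternation you want. So the alternation, and with it Step~3, rests entirely on the justification of that remark, and the Sturm comparison it cites goes the other way.

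With correct indices, $Q=P_{n-1}^{(-m)}$ and $R=P_n^{(-m)}$ have the same $m=n-k-1$, not $(n-1,n-k-2)$. By Lemma~\ref{lem:Pdifur} both solve
\[
\bigl((x-x^2)^{1-m}y'\bigr)'+\lambda\,(x-x^2)^{-m}y=0,\qquad \lambda_R=(2n-k-1)(k+2),\quad \lambda_Q=(2n-k-2)(k+1)=\lambda_R-2n .
\]
The larger coefficient belongs to $R$. Comparison therefore places a zero of $R$ between consecutive zeros of $Q$, not a zero of $Q$ between consecutive zeros of $R$. The latter does not follow from comparison at all: take $\sin 2x$ and $\cos x$ on $(0;\pi/2)$. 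The comparison of potentials that you set aside as something to double-check is exactly where the argument fails.

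The correct-direction comparison yields only $k-1$ zeros of $R$, one in each $(q_i;q_{i+1})$. You still need a zero of $R$ in $(0;q_1)$ and one in $(q_k;1)$; after that your counting argument finishes the job ($k+1$ disjoint intervals, exactly $k+1$ zeros).

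\emph{First way.} Integrate
\[
\bigl(p\,(Q'R-QR')\bigr)'=2n\,w\,QR,\qquad p=(x-x^2)^{1-m},\quad w=(x-x^2)^{-m},
\]
over $(0;q_1)$. Since $Q$ and $R$ vanish at $0$ to the same order $m$, $Q'R-QR'=O(x^{2m})$, so the boundary term $p\,(Q'R-QR')=O(x^{m+1})$ drops out. If $R$ had constant sign on $(0;q_1)$, the remaining term $p(q_1)Q'(q_1)R(q_1)$ and the integral on the right would be incompatible in sign. The interval $(q_k;1)$ is handled the same way.

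\emph{More elementary way, for $k\geqslant 1$.} The first lemma of Section~2 with $(n,k)$ replaced by $(n-1,k-1)$ gives $R=2kS+(2x-1)Q$, where $S=P_{n-1}^{(k-n)}$ and $S'=Q$. Hence $R(q_i)=2kS(q_i)$. By Rolle and counting, the $k-1$ interior zeros of $S$ separate the $q_i$, so these values alternate in sign. Moreover $S$ and $R$ have opposite signs near each endpoint; for instance, at $0$ one has $S\sim(-1)^{n-1}c\,x^{n-k}$ and $R\sim(-1)^{n}c'\,x^{n-k-1}$ with $c,c'>0$. So $R$ has all $k+1$ required sign changes.
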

\begin{proof}
 Let us carry out the proof by the method of mathematical induction

1)
 The induction base:
for any
$n\geqslant 1$
and
$k=0$,
the given assertion holds.
 Indeed,
in this case,
$$
(A^2_{n,0})'=-P_{n-1}^{(-n+1)}\cdot P_{n}^{(-n+1)}=-\dfrac{t^{n-1}}{(n-1)!}\cdot
\dfrac{t^{n-1}}{(n-1)!}(2x-1).
$$
$P_{n-1}^{(-n+1)}$
has no zeros inside the interval
$(0;1)$,
while
$P_{n}^{(-n+1)}$
has exactly one zero
$x=1/2$,
and this is a maximum point.

2) Let,
for
some
$n$
and all
$k\leqslant n-1$,
the assertion hold
for the pair of polynomials
$P_{n-1}^{(k-n+1)}$
and
$P_{n}^{(k-n+1)}$;
let us prove it
for
$P_{n}^{(k-n+1)}$
and
$P_{n+1}^{(k-n+1)}$
(i.e., we pass to the pair of indices
$n+1$
and
$k+1$).
 By Lemma~\eqref{lem:APP} and formula~\eqref{eq:Pprimk},
we obtain
\begin{align*}
-(A^2_{n+1,k+1})'&= P_{n}^{(k-n+1)} P_{n+1}^{(k-n+1)}= -(A^2_{n,k})'+2(2n+1) P_{n}^{(k-n)}
P_{n}^{(k-n+1)}\\
&= P_{n-1}^{(k-n+1)} P_{n}^{(k-n+1)} + 2(2n+1) P_{n}^{(k-n)} P_{n}^{(k-n+1)}.
\end{align*}
 Since $P_{n}^{(k-n+1)}$
is the derivative of $P_{n}^{(k-n)}$,
it follows,
by Rolle's theorem,
that their zeros alternate and, further,
the zeros of
$P_{n}^{(k-n+1)}$
are the maximum points of the function
$(P_{n}^{(k-n)})^2$
and the zeros of
$P_{n}^{(k-n)}$
are its minimum points.

 By the induction hypothesis,
the zeros of
$P_{n-1}^{(k-n+1)}$
and
$P_{n}^{(k-n+1)}$
alternate.
 The point
$x_0 = {1}/{2}$
is
a zero of
$P_{n}^{(k-n+1)}$
if and only if
$k$
is even,
and
it is known from~\cite{NazM} that the point
$x_0 = {1}/{2}$
is a local maximum of
$A_{n,k}$
for even~$k$
and a local minimum
for odd~$k$.
 Thus, the zeros of
$P_{n}^{(k-n+1)}$
are the maximum points of
$A_{n,k}$.

 Then
it follows from formula~\eqref{eq:PA}
that the zeros of
$P_{n}^{(k-n+1)}$
are the minimum points of
$A^2_{n+1,k+1}$
and the zeros of
$P_{n+1}^{(k-n+1)}$
are the maximum points of
$A^2_{n+1,k+1}$,
respectively,
which implies the assertion of the lemma.
\end{proof}

\section{The global maximum of $A^2_{n,k}$}
\label{sub:Anglob}

\begin{thm}
\label{thm:derAPP}
 The values of the quantities
$A^2_{n,k}$
at their local maximum points
lying
on the closed interval
$[0,{1}/{2}]$
constitute a nondecreasing sequence
and those
at the local maximum points
lying
on the closed interval
$[\frac{1}{2},1]$
constitute a nonincreasing sequence
(see the figure).
\end{thm}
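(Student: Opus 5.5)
The plan is a Sonin-type argument: build an auxiliary function that coincides with $A^2_{n,k}$ at its local maximum points and is monotone on each half of the interval. Put
$$
y:=P_{n}^{(k-n+1)},\qquad z:=P_{n-1}^{(k-n+1)},\qquad m:=n-k-1 .
$$
By Lemma~\ref{lem:APP} we have $(A^2_{n,k})'=-zy$. By Lemma~\ref{lem:zeroes} the local maximum points of $A^2_{n,k}$ in $(0;1)$ are exactly the zeros of $y$. The endpoints need a separate look, but there $A^2_{n,k}$ vanishes. Since $y=P_n^{(-m)}$, Lemma~\ref{lem:Pdifur} gives
$$
(x-x^2)y''+(2x-1)(m-1)y'+cy=0,\qquad c=(n+m)(n-m+1).
$$

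The first step is to eliminate $z$ in favour of $y$ and $y'$. Lemma~2.1, with $n$ replaced by $n-1$, expresses $y'=P_n^{(k-n+2)}$ through $z$ and $z'$. Combining this with relation~\eqref{eq:Prec} and the differential equations~\eqref{eq:Pdifur} for $y$ and for $z$, I expect a lowering-operator identity
$$
z=a(x)\,y+b(x)\,y',
$$
with explicit polynomial or rational coefficients. This is the analogue of $(x^2-1)P_n'=n(xP_n-P_{n-1})$ for the classical Legendre polynomials. I would first derive it by comparing the Rodrigues-type representations $(t^{n}/n!)^{(k+1)}$ and $(t^{n-1}/(n-1)!)^{(k)}$ in the variable $t=x^2-x$.

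The second step introduces
$$
F(x):=A^2_{n,k}(x)+g(x)\bigl(y'(x)\bigr)^2 ,
$$
so that $F=A^2_{n,k}$ at every zero of $y$. Differentiating, substituting $y''$ from the differential equation and $z$ from the identity above, one obtains
$$
F'=-a\,y^2+\Bigl(-b-\frac{2gc}{x-x^2}\Bigr)yy'+\Bigl(g'-\frac{2g(2x-1)(m-1)}{x-x^2}\Bigr)(y')^2 .
$$
Choosing $g=-b(x-x^2)/(2c)$ kills the mixed term. We are then left with a quadratic form in $y$ and $y'$ with explicit diagonal coefficients. If both coefficients are $\geqslant0$ on $(0;1/2)$, then $F$ is nondecreasing there, so the values of $A^2_{n,k}$ at successive maxima in $[0;1/2]$ form a nondecreasing sequence.

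The second half follows by symmetry. The substitution $x\mapsto1-x$ leaves $t$ invariant and multiplies $P_n^{(-m)}$ by $\pm1$, so $A^2_{n,k}(1-x)=A^2_{n,k}(x)$. Hence the statement on $[1/2;1]$ follows from the one on $[0;1/2]$.

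The main obstacle is the sign check in the last step. The coefficient of $y^2$, namely $-a$, need not have a definite sign. If it fails, I would try to modify $F$ by a further term $h(x)y^2$, which also vanishes at the zeros of $y$, and choose $h$ so that the resulting quadratic form becomes semidefinite on $(0;1/2)$. Its discriminant condition should reduce, in the variable $t\in[-1/4;0]$, to an inequality between low-degree polynomials in $t$ that can be checked directly. The fact that $x=1/2$ is an extremum for every $k$ (from~\cite{NazM}) should serve as a consistency check on the chosen $g$ and $h$.
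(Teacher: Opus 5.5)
There is a genuine gap: your auxiliary function is the wrong one. You correctly take the local maxima of $A^2_{n,k}$ in $(0;1)$ to be the zeros of $y=P_n^{(k-n+1)}$. But the correction $g\,(y')^2$ does not vanish there. The leading coefficient $x-x^2$ of~\eqref{eq:Pdifur} is nonzero on $(0;1)$, so by uniqueness every zero of $y$ in $(0;1)$ is simple, and $y'\neq0$ at it. Your $g=-b(x-x^2)/(2c)$ is nonzero on $(0;1)$, so $F$ differs from $A^2_{n,k}$ at every maximum point. Monotonicity of $F$ would then tell you nothing about the values you want to compare.

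You have transplanted the Sonin function $y^2+(y')^2/p$, whose correction vanishes at zeros of $y'$. Here the quantity compared is $A^2_{n,k}$ at zeros of $y$, so the correction must vanish at zeros of $y$. Your fallback repeats the slip when you say $h y^2$ \emph{also} vanishes there.

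The sign check fails as well. The identity you anticipate is
\[
z=\frac{(2x-1)(2n-k-1)}{k+1}\,y+\frac{2(x-x^2)}{k+1}\,y'.
\]
It follows from $-z=2(2n-k)P_n^{(k-n)}+(1-2x)y$ (relation~\eqref{eq:Prec} plus Lemma~2.1) together with equation~\eqref{eq:Pdifur} for $P_n^{(k-n)}$, whose derivative is $y$. With your choice of $g$ this gives
\[
F'=\frac{(2n-k-1)(1-2x)}{k+1}\,y^2-\frac{2(n-k-1)(x-x^2)(1-2x)}{(k+1)(k+2)(2n-k-1)}\,(y')^2,
\]
which is indefinite on $(0;1/2)$ whenever $k\leqslant n-2$.

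The missing idea is to drop the $(y')^2$ term and correct only by a multiple of $y^2$, which does vanish at the maximum points. For $B=A^2_{n,k}+h\,y^2$ one gets $B'=(h'-a)\,y^2+(2h-b)\,yy'$. Taking $h=b/2=(x-x^2)/(k+1)$ removes the cross term without using the differential equation for $y$ at all. Then $B'=\frac{2n-k}{k+1}(1-2x)\,y^2$ has the sign of $1-2x$.

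This is exactly the paper's function $B_{n,k}$: its $f+g$ is $(x-x^2)/(k+1)$, and the split into $f$ and $g$ is only bookkeeping. It handles $[0;1/2]$ and $[1/2;1]$ simultaneously, so your symmetry step, while correct, is not needed.
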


\begin{proof}
 For each fixed
$n$
and
$k$,
we consider the function
$$
B_{n,k}(x) = A^2_{n,k}(x) + (f(x)+g(x)) (P_{n}^{(k-n+1)}(x))^2
$$
with
some,
so far arbitrary, differentiable functions~$f$ and~$g$.
 The concrete form of these functions will be indicated in what follows.
 Then
\begin{align*}
B'_{n,k}(x) &= (A^2_{n,k})' + (f'(x)+g'(x)) (P_{n}^{(k-n+1)})^2 + 2 (f(x)+g(x))
P_{n}^{(k-n+1)} P_{n}^{(k-n+2)}\\
&=-P_{n-1}^{(k-n+1)} P_{n}^{(k-n+1)} + (f'(x)+g'(x)) (P_{n}^{(k-n+1)})^2 + 2 (f(x)+g(x))
P_{n}^{(k-n+1)} P_{n}^{(k-n+2)}\\
&=P_{n}^{(k-n+1)} \left(-P_{n-1}^{(k-n+1)} + g'(x) P_{n}^{(k-n+1)} + 2
(f(x)+g(x))P_{n}^{(k-n+2)} \right) + f'(x) (P_{n}^{(k-n+1)})^2.
\end{align*}
 It follows from the recurrence relation~\eqref{eq:Prec}
and Lemma~\ref{lem:APP} that
$$
-P_{n-1}^{(k-n+1)} = 2(2n+1)P_{n}^{(k-n)}-P_{n+1}^{(k-n+1)} = 2(2n+1)P_{n}^{(k-n)}-2(k+1)
P_{n}^{(k-n)} - (2x-1) P_{n}^{(k-n+1)}.
$$
 Thus,
\begin{multline*}
B'_{n,k}(x) = P_{n}^{(k-n+1)} \left(2(2n-k)P_{n}^{(k-n)}+ g'(x) P_{n}^{(k-n+1)} + 2
(f(x)+g(x))P_{n}^{(k-n+2)} \right) +\\+
f'(x) (P_{n}^{(k-n+1)})^2+(1-2x)(P_{n}^{(k-n+1)})^2.
\end{multline*}
 We choose
$f(x)$
and
$g(x)$
so that the first summand
is equal to zero.
 Taking into account the differential equation~\eqref{eq:Pdifur}
for
$m=n-k$,
we obtain the system
\begin{equation*}
\begin{cases}
f(x)+g(x) = \dfrac{(x-x^2)}{k+1}, \\
g'(x) = (2x-1)\dfrac{2(n-k-1)}{k+1}.
\end{cases}
\end{equation*}
 Therefore,
$$f'(x) = (1-2x) \dfrac{2(n-k)}{k+1}.$$
 Hence
$$
B'_{n,k}(x) = (1-2x) \dfrac{2(n-k)}{k+1} (P_{n}^{(k-n+1)})^2+(1-2x)(P_{n}^{(k-n+1)})^2 =
(1-2x)\dfrac{2n-k+1}{k+1}(P_{n}^{(k-n+1)})^2.
$$
 Thus, the function
$B_{n,k}(x)$
does not decrease
on the closed interval
$[0,{1}/{2}]$
and
does not increase
on the closed interval
$[{1}/{2},1]$,
while the function
$P_{n}^{(k-n+1)}$
is zero
at the maximum points of
$A^2_{n,k}(x)$,
whence we obtain the assertion of the theorem.
\end{proof}

\begin{picture}(400,150)
\put(0,0){
\begin{picture}(200,150)
\put(10,30){\includegraphics[scale=0.25]{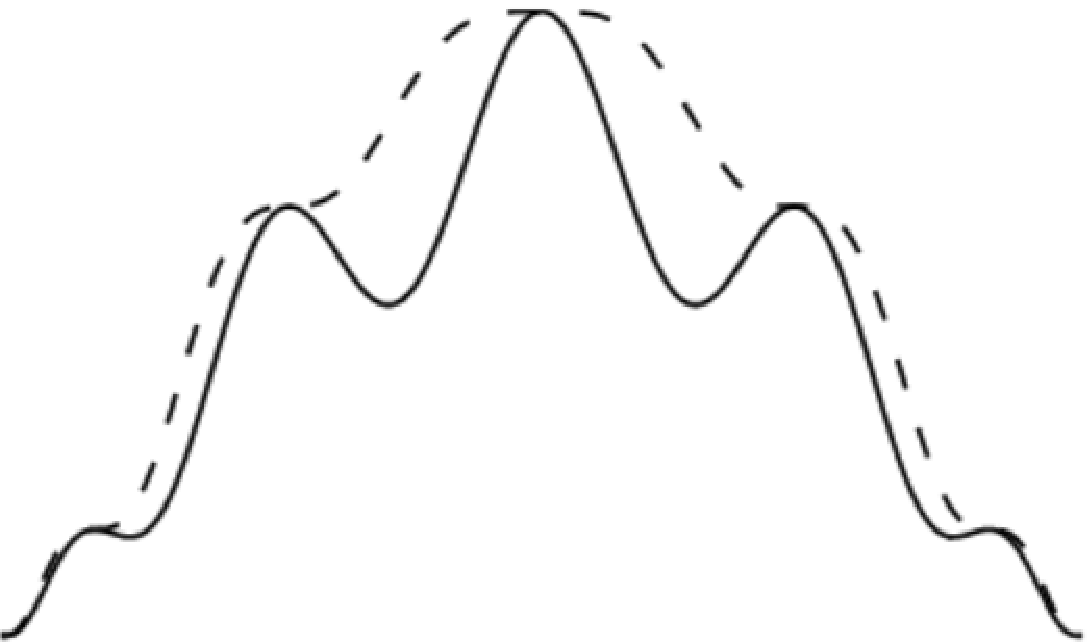}}
\put(0,30){\vector(1,0){160}}\put(155,22){$x$}
\put(10,10){\vector(0,1){110}}\put(2,109){$y$}
\put(5,20){$0$}
\put(135,20){$1$}
\multiput(74.5,28)(0,7){11}{\line(0,1){3}}
\put(68,20){$1/2$}
\put(30,05){Graphs of $A^2_{6,4}$ and $B_{6,4}$.}
\multiput(110,115)(7,0){3}{\line(1,0){3}}
\put(130,115){${}_{B_{6,4}}$}
\put(110,105){\line(1,0){20}}
\put(130,105){${}_{A^2_{6,4}}$}
\end{picture}
}
\put(280,0){
\begin{picture}(200,150)
\put(9,30){\includegraphics[scale=0.25]{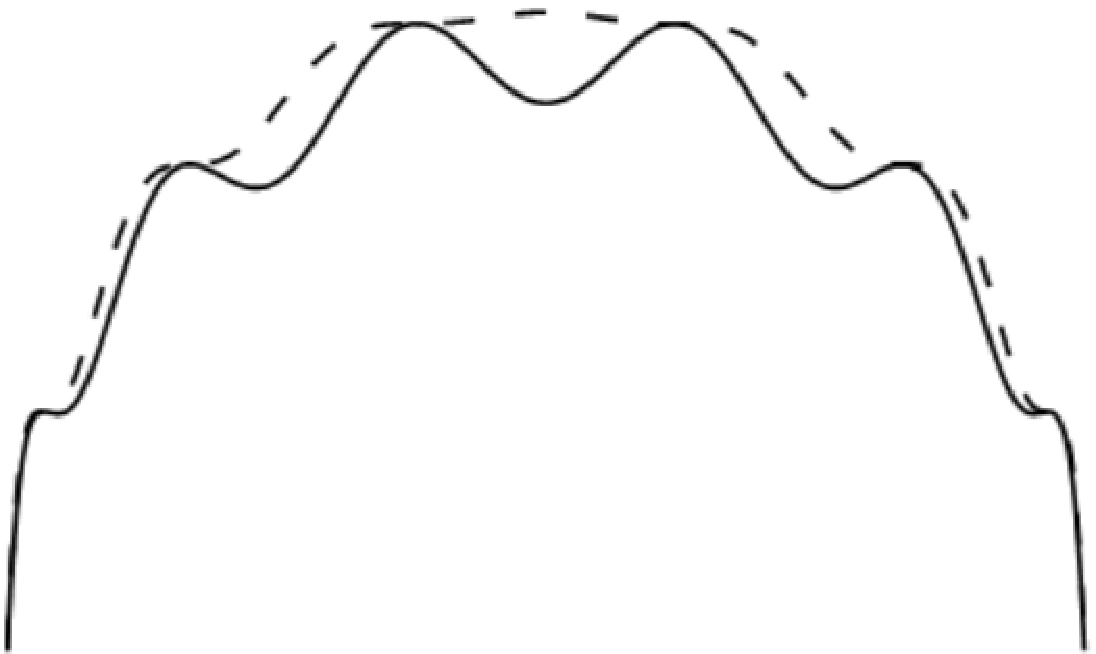}}
\put(0,30){\vector(1,0){160}}\put(155,22){$x$}
\put(10,10){\vector(0,1){110}}\put(2,109){$y$}
\put(5,20){$0$}
\put(135,20){$1$}
\multiput(74.5,28)(0,7){10}{\line(0,1){3}}
\put(68,20){$1/2$}
\multiput(110,115)(7,0){3}{\line(1,0){3}}
\put(130,115){${}_{B_{6,5}}$}
\put(110,105){\line(1,0){20}}
\put(130,105){${}_{A^2_{6,5}}$}
\put(30,05){Graphs of $A^2_{6,5}$ and $B_{6,5}$.}
\end{picture}
}
\end{picture}

\bigskip

 It follows from Lemma~\ref{lem:APP} that
the zeros of the polynomials
$P_{n-1}^{(k-n+1)}$
and
$P_{n}^{(k-n+1)}$
are
exactly
the extremal points of
$A^2_{n,k}(x)$.
 It follows from Theorem~\ref{thm:derAPP}
that the zeros of
$P_{n-1}^{(k-n+1)}$
are its minimum points
and the zeros of $P_{n}^{(k-n+1)}$
are the maximum points of $A^2_{n,k}(x)$.

 The properties of the maxima
and the minima of the quantities
$A^2_{n,k}(x)$
are similar to the properties of the values of~$|y(t)|$
at the extremal points of the solution~$y(t)$
of the equation
$-y''+p(t)y=0$
under the nonnegativity or nonpositivity condition for~$p'$
(the Sonin--P\'olya theorem; see~\cite[Sec.~19]{Trik},~\cite[6.2.2]{Adr}).
 The difference is in that, in the Sonin--P\'olya theorem,
the monotone sequence is formed by the absolute values
at all the extremal points of the function~$y$.

\begin{rim}
 The values of the functions
$A^2_{n,k}$
possess similar properties
at the minimum points.
 It follows from
Lemmas~\ref{lem:APP},~\ref{lem:zeroes}
that the zeros of the polynomial
$P_{n}^{(k-n+1)}$
are
the maximum points of the function
$A^2_{n,k}$
and,
simultaneously, the minimum points of the function
$A^2_{n+1,k+1}$.
\end{rim}

\section{The constants $\Lambda^2_{n,k}$. The general case of even~$k$}
\label{sub:Anchet}

 In this section,
we obtain expressions for the exact embedding constants
$\Lambda^2_{n,k}$
on the closed interval
$[-1;1]$
in the case of any even
$k\in[0;n-1]$.
 In this section,
the $P_n$ denote the Legendre polynomials
on the closed interval
$[-1;1]$.

 In~\cite{Kalyab}, the following formulas were obtained:
$$
\Lambda^2_{n,0}=A^2_{n,0}(0)=\dfrac{1}{2^{2n-1}((n-1)!)^2(2n-1)},\qquad
\Lambda^2_{n,2}=A^2_{n,2}(0)=\dfrac{1}{2^{2n-3}((n-2)!)^2(2n-5)}.
$$

\begin{thm}
 The exact values of the embedding constants
on the closed interval
$[-1;1]$
for
$k=2l$, $l=0,1,\ldots $
are of the form
$$
\Lambda^2_{n,k}=A^2_{n,k}(0)=\dfrac{((k-1)!!)^2}{2^{2n-k-1}((n-(k/2)-1)!)^2(2n-2k-1)}.
$$
\end{thm}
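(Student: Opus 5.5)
The proof has two parts. First we show that the global maximum of $A^2_{n,k}$ for even $k$ is attained at the midpoint. Then we evaluate $A^2_{n,k}$ at the midpoint explicitly. We work first on $[0;1]$ with shifted Legendre polynomials and transfer to $[-1;1]$ at the end.

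\emph{Step 1: the maximum is at the midpoint.} By Lemma~\ref{lem:zeroes}, the local maximum points of $A^2_{n,k}$ in $(0;1)$ are exactly the zeros of $P_n^{(k-n+1)}=\bigl(t^n/n!\bigr)^{(k+1)}$. This polynomial has the parity of $k+1$ about $x=1/2$, so for even $k$ it is odd about $1/2$ and vanishes there. Theorem~\ref{thm:derAPP} says the values at the maximum points increase on $[0;1/2]$ and decrease on $[1/2;1]$. Hence the largest value among interior maxima is $A^2_{n,k}(1/2)$. The endpoints cannot compete, since $A^2_{n,k}(0)=A^2_{n,k}(1)=0$ by the boundary conditions. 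Therefore $\Lambda^2_{n,k}=A^2_{n,k}(1/2)$ on $[0;1]$. Passing to $[-1;1]$ by $x\mapsto 2x-1$ is a scaling of Sobolev norms: a function on $[-1;1]$ corresponds to $g(y)=f(2y-1)$, so $A^2$ is multiplied by $2^{2n-2k-1}$. This gives $\Lambda^2_{n,k}=A^2_{n,k}(0)$ on $[-1;1]$.

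\emph{Step 2: computing $A^2_{n,k}(0)$ on $[-1;1]$.} Iterate relation~\eqref{eq:PA} $k$ times down to $k=0$:
\[
A^2_{n,k}(0)=A^2_{n-k,0}(0)-\sum_{j=1}^{k}c_j\bigl(P_{n-j}^{(k-j-n+j)}(0)\bigr)^2 ,
\]
with the appropriate odd weights $c_j$ and constants rescaled to $[-1;1]$. The term $A^2_{n-k,0}(0)$ is known from \cite{Kalyab}. Each antiderivative at $0$ is a derivative of $(y^2-1)^m/(2^m m!)$ of some order $r$. Its value is zero unless $r$ is even, and for $r=2s$ it equals
\[
\frac{(-1)^{m-s}(2s)!\binom{m}{s}}{2^m m!}.
\]
So only every other term in the sum survives, and the problem reduces to a finite alternating-free sum of squared binomial-type quantities.

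\emph{Main obstacle: summing in closed form.} I would first try induction on $l=k/2$, applying~\eqref{eq:PA} twice to go from $(n,k)$ to $(n-2,k-2)$. One of the two subtracted terms vanishes at $0$ by parity, so
\[
A^2_{n,k}(0)=A^2_{n-2,k-2}(0)-(2n-1)\bigl(P_{n-1}^{(k-n)}(0)\bigr)^2 .
\]
It then suffices to verify that the claimed formula satisfies this two-step recurrence. By the above expression, $P_{n-1}^{(k-n)}(0)$ is an explicit ratio of factorials. The check becomes a rational identity in $n$ and $k$, of the form $\frac{1}{2n-2k+3}$-type terms combining into $\frac{1}{2n-2k-1}$ after factoring out the double factorials. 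The bases $l=0$ and $l=1$ are the formulas from~\cite{Kalyab} quoted above. If the one-step bookkeeping between the intermediate odd index $k-1$ and the scaling factors becomes messy, I would instead do Step 2 directly on $[-1;1]$ using the $[-1;1]$ analogue of~\eqref{eq:PA}, also from \cite{Kalyab}, so no rescaling is needed inside the induction.
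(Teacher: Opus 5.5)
Your Step~1 is the paper's argument: for even $k$ the midpoint is a zero of $P_n^{(k-n+1)}$, then Lemma~\ref{lem:zeroes} and Theorem~\ref{thm:derAPP} apply, followed by rescaling to $[-1;1]$. Your Step~2 plan, induction on $l=k/2$ via two applications of~\eqref{eq:PA} plus parity, is also the paper's route. \textbf{The gap is that the two-step recurrence you write down keeps the wrong term, so the verification you propose would fail.} On $[-1;1]$,
\[
A^2_{n,k}(0)=A^2_{n-2,k-2}(0)-\Bigl(n-\frac32\Bigr)\bigl(P_{n-2}^{(k-n)}(0)\bigr)^2-\Bigl(n-\frac12\Bigr)\bigl(P_{n-1}^{(k-n)}(0)\bigr)^2 .
\]
Here $P_{n-1}^{(k-n)}$ is a multiple of $\bigl((x^2-1)^{n-1}\bigr)^{(k-1)}$, a derivative of odd order $k-1$. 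By your own parity rule, this is the term that vanishes at $0$; in your iterated sum, every odd $j$ dies, including $j=1$. The survivor is $P_{n-2}^{(k-n)}$, a multiple of $\bigl((x^2-1)^{n-2}\bigr)^{(k-2)}$. So your recurrence $A^2_{n,k}(0)=A^2_{n-2,k-2}(0)-(2n-1)\bigl(P_{n-1}^{(k-n)}(0)\bigr)^2$ collapses to $A^2_{n,k}(0)=A^2_{n-2,k-2}(0)$. The claimed formula contradicts this, since it gives $A^2_{n,k}(0)/A^2_{n-2,k-2}(0)=(k-1)^2/(2n-k-2)^2<1$ for $n\geqslant k+1$.

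With the correct term, your coefficient formula with $m=n-2$, $s=(k-2)/2$ gives
$\bigl(n-\frac32\bigr)\bigl(P_{n-2}^{(k-n)}(0)\bigr)^2=\dfrac{((k-3)!!)^2(2n-3)}{2^{2n-k-1}((n-k/2-1)!)^2}$.
The closing algebra is also not what you anticipate. The factor $2n-2k-1$ is the same for $(n,k)$ and $(n-2,k-2)$, so nothing like $1/(2n-2k+3)$ appears. After factoring out $\dfrac{((k-3)!!)^2}{2^{2n-k-1}((n-k/2-1)!)^2(2n-2k-1)}$, the difference reduces to $(2n-k-2)^2-(2n-3)(2n-2k-1)=(k-1)^2$. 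This is what upgrades $((k-3)!!)^2$ to $((k-1)!!)^2$. Finally, keep the normalizations separate. The weights $2n-1$ and $2n-3$ belong to $[0;1]$, where the target value at $x=1/2$ carries the factor $2^{4n-3k-2}$. At $0$ on $[-1;1]$ the weights are $n-\frac12$ and $n-\frac32$.
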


\begin{proof}
 Let us carry out the proof by the method of mathematical induction.
 The induction base was verified in~\cite{Kalyab}.

 It was conjectured in~\cite{Kalyab},
and proved in~\cite{NazM} that, for even~$k$,
the function
$A^2_{n,k}(x)$
has a local maximum
on the closed interval
$[-1;1]$
for
$x=0$.
 It follows from Theorem~\ref{thm:derAPP}
that, for even~$k$,
the point
$x=0$
is a global maximum point for the function
$A^2_{n,k}$.

 For the closed interval
$[-1;1]$,
formula~\eqref{eq:PA}
is written as follows
(see~\cite{Kalyab}):
$$
A^2_{n,k}(x)=A^2_{n-1,k-1}(x)-\left(n-\frac12\right)\left(P_{n-1}^{(k-n)}(x)\right)^2.
$$
 Therefore,
$$
A^2_{n-1,k-1}(x)=A^2_{n-2,k-2}(x)-\left(n-\frac32\right)\left(P_{n-2}^{(k-n)}(x)\right)^2,
$$
whence we have
$$
A^2_{n,k}(x)=A^2_{n-2,k-2}(x)-\bigl(P_{n-2}^{(k-n)}(x)\bigr)^2\left(n-\frac32\right)-\bigl
(P_{n-1}^{(k-n)}(x)\bigr)^2\left(n-\frac12\right).
$$

 In particular, this formula is also valid
for
$x=0$.
  The antiderivative of order $n-k$
of the Legendre polynomial
$P_{n-1}$
is of the form
$$
P_{n-1}^{(k-n)}(x)=\dfrac{1}{2^{n-1}}\dfrac{1}{(n-1)!}\left((x^2-1)^{n-1}\right)^{(k-1)}.
$$

 The polynomial
$(x^2-1)^{n-1}$
contains only even powers of~$x$;
if
$k=2l$
is an even number,
then
$\left((x^2-1)^{n-1}\right)^{(k-1)}$
only contains odd powers of~$x$;
therefore,
$\bigl(P_{n-1}^{(k-n)}(0)\bigr)^2=0$.

 For the polynomial
$P_{n-2}$,
the situation is different:
$$
P_{n-2}^{(k-n)}(x)=\dfrac{1}{2^{n-2}}\dfrac{1}{(n-2)!}\left((x^2-1)^{n-2}\right)^{(k-2)}.
$$
 If
$k=2l$
is an even number,
then $k-2$
is also an even number.
 Therefore,%
\footnote{Here and elsewhere,
$C_n^m$
denotes the binomial coefficient
$\binom nm$.}
$$
(x^2-1)^{n-2}=\sum_{j=0}^{n-2}C_{n-2}^j x^{2j}\cdot (-1)^{n-2-j}.
$$

 For
$x=0$,
the nonzero contribution to the expression
$$
\left(\sum_{j=0}^{n-2}C_{n-2}^j x^{2j}\cdot (-1)^{n-2-j}\right)^{(k-2)}
$$
is only made by the summand
containing $x$
with exponent
$2j=k-2$.
 This contribution is
$$
C_{n-2}^{{(k-2)}/{2}}(k-2)!(-1)^{n-2-j}.
$$

 As a result, we obtain
$$
\bigl(P_{n-2}^{(k-n)}(0)\bigr)^2\left(n-\frac32\right)=\dfrac{1}{2^{2n-4}}\dfrac{1}{((n-2)
!)^2}
\left(C_{n-2}^{{(k-2)}/{2}}(k-2)!\right)^2\left(n-\frac32\right).
$$
 This expression can be written as
$$
\dfrac{1}{2^{2n-4}}\dfrac{1}{((n-2)!)^2}
\dfrac{((n-2)!)^2((k-2)!)^2\left(n-\frac32\right)}{((\frac{k-2}{2})!)^2((n-2-k/2+1)!)^2}=
\dfrac{((k-2)!)^2(2n-3)}{2^{2n-3}((\frac{k-2}{2})!)^2((n-k/2-1)!)^2}.
$$

 Since
$$
\left(\left(\frac{k-2}{2}\right)!\right)^2=\dfrac{((k-2)!!)^2}{2^{k-2}},
$$
we have
\begin{equation}
\label{eq:P}
\bigl(P_{n-2}^{(k-n)}(0)\bigr)^2\left(n-\frac32\right)=\dfrac{((k-3)!!)^2(2n-3)}{2^{2n-k-1
}((n-k/2-1)!)^2}.
\end{equation}

 By the induction assumption,
for an even~$k$,
\begin{equation}
\label{eq:A}
A_{n-2,k-2}^2(0)=\dfrac{((k-3)!!)^2}{2^{2n-k-3}((n-2-k/2)!)^2(2n-2k-1)}
\end{equation}
 It remains to find the difference of~\eqref{eq:P}
and~\eqref{eq:A}:
$$
A_{n,k}^2(0)=\dfrac{((k-3)!!)^2}{2^{2n-k-3}((n-2-k/2)!)^2(2n-2k-1)}-\dfrac{((k-3)!!)^2(2n-
3)}{2^{2n-k-1}((n-k/2-1)!)^2}.
$$
 In this difference,
taking the common multiplier
$$
\dfrac{((k-3)!!)^2}{2^{2n-k-1}((n-k/2-1)!)^2(2n-2k-1)}
$$
out of the bracket,
we obtain the remaining difference
$$
(2n-2-k)^2-(2n-3)(2n-2k-1)=(k-1)^2.
$$

 The application of the method of mathematical induction
concludes the proof.
\end{proof}

\section{Embedding constants on the closed interval $[0;1]$}
 The following formula
relating the functions
$A^2_{n,k}$
defined on the closed intervals
$[0;1]$
and
$[-1;1]$:
$$
A^2_{n,k,[-1;1]}(2x-1)=2^{2n-2k-1}A^2_{n,k,[0;1]}(x).
$$
was established in~\cite{GarmSh}
on the basis of the properties of the spectral problem
\begin{gather}
\label{eq:krzad}
(-1)^ny^{(2n)}=\lambda (-1)^k y^{(k)}(a)\delta^{(k)}(x-a),\\
y^{(j)}(0)=y^{(j)}(1)=0,\quad j=0,1,\ldots,n-1.
\end{gather}

 Using the relation between $A^2_{n,k,[-1;1]}(2x-1)$ and $A^2_{n,k,[0;1]}(x)$,
we see that,
on the closed interval
$[0;1]$,
the exact value of the embedding constant
is of the form
$$
\Lambda^2_{n,k,[0;1]}:=A^2_{n,k,[0;1]}(1/2)=\dfrac{((k-1)!!)^2}{2^{4n-3k-2}((n-(k/2)-1)!)^
2(2n-2k-1)}.
$$

 This recalculation formula
can also be obtained
directly from the definition of~$A^2_{n,k}$~\eqref{eq:ekstr}
on the closed intervals
$[0;1]$
and
$[-1;1]$,
respectively.

\section{On the symmetry of the extremal function}

 The paper~\cite{GarmSh}
also contains the following explicit expressions for the extremal splines:
\begin{equation}
\label{eq:g}
g_{n,k}(x)=\left\{\begin{aligned}
&\dfrac{(-1)^{n-k-1}}{(2n-k-1)!}(1-a)^{n-k}x^n h_{n,k}(1-x,1-a),\quad x\in [0;a]\\
&\dfrac{(-1)^{n-1}}{(2n-k-1)!}a^{n-k}(1-x)^{n} h_{n,k}(x,a), \quad x\in [a;1],
\end{aligned}\right.
\end{equation}
where the polynomials
$h_{n,k}(x,a)$
are defined by the formula
$$
h_{n,k}(x,a)=\sum_{l=0}^{n-1}(-1)^{n-1-l}C_{2n-1-k}^{n-1-l}x^{n-1-l}a^l\sum_{m=0}^{l}C_{n-
1+m}^{m}x^{m}.
$$

 It is seen from formula~\eqref{eq:g} that
the extremal spline is symmetric
if and only if
$a=\frac12$;
thus, for even~$k$,
the extremal spline possesses the symmetry property:
$$
g_{n,k}(x,1/2)=g_{n,k}(1-x,1/2),
$$
while, for odd~$k$,
the extremal spline is not symmetric.


\end{document}